\documentclass{amsart}

\usepackage{amsmath}
\usepackage{amsxtra}
\usepackage{amscd}
\usepackage{amsthm}
\usepackage{amsfonts}
\usepackage{amssymb}
\usepackage{mathtools}
\usepackage{eucal}
\usepackage{graphicx}
\usepackage{mathrsfs}
\usepackage{color}
\usepackage{caption}
\usepackage[margin=1in]{geometry}
\usepackage{enumerate}
\usepackage{faktor}
\usepackage{bbm}
\usepackage{hyperref}
\usepackage[T1]{fontenc} % improved font encoding
\usepackage[utf8]{inputenc} % for better handling of non-ASCII characters

\hypersetup{
  colorlinks = true,
  urlcolor = cyan,
}
\usepackage{comment}

\theoremstyle{plain}
\newtheorem{theorem}{Theorem}
\newtheorem{lemma}[theorem]{Lemma}

\newtheorem{problem}[theorem]{Problem}

\theoremstyle{definition}
\newtheorem{definition}[theorem]{Definition}

\theoremstyle{remark}
\newtheorem{remark}[theorem]{Remark}

\numberwithin{equation}{section}

\newcommand{\Q}{\mathbb{Q}}

\newcommand{\Z}{\mathbb{Z}}
\newcommand{\calo}{\mathcal{O}}
\newcommand{\ol}{\overline}

\newcommand{\Gal}{\mathrm{Gal}}
\newcommand{\Hom}{{\mathrm{Hom}}}
\DeclareMathOperator{\GL}{GL}
\DeclareMathOperator{\id}{id}
\DeclareMathOperator{\lcm}{lcm}
\DeclareMathOperator{\ord}{ord}
\DeclareMathOperator{\cok}{cok}
\DeclareMathOperator{\Cl}{Cl}
\DeclareMathOperator{\M}{M}

\newcommand{\starpar}[1]{%
  \par\vspace{\abovedisplayskip}%
  \noindent
  \makebox[1cm][l]{$(\star)$}%
  \parbox[t]{\dimexpr\linewidth-2cm}{#1}%
  \par\vspace{\belowdisplayskip}%
}

\makeatletter
\AtBeginDocument
 {
    \def\@thm#1#2#3{%
      \ifhmode
        \unskip\unskip\par
      \fi
      \normalfont
      \trivlist
      \let\thmheadnl\relax
      \let\thm@swap\@gobble
      \let\thm@indent\indent % indent
      \thm@headfont{\scshape}% heading font small caps
      \thm@notefont{\fontseries\mddefault\upshape}%
      \thm@headpunct{.}% add period after heading
      \thm@headsep 5\p@ plus\p@ minus\p@\relax
      \thm@space@setup
      #1% style overrides
      \@topsep \thm@preskip               % used by thm head
      \@topsepadd \thm@postskip           % used by \@endparenv
      \def\dth@counter{#2}%
      \ifx\@empty\dth@counter
        \def\@tempa{%
          \@oparg{\@begintheorem{#3}{}}[]%
        }%
      \else
        \H@refstepcounter{#2}%
        \hyper@makecurrent{#2}%
        \let\Hy@dth@currentHref\@currentHref
        \AddToHookNext{para/begin}{\MakeLinkTarget*{\Hy@dth@currentHref}}%
        \def\@tempa{%
          \@oparg{\@begintheorem{#3}{\csname the#2\endcsname}}[]%
        }%
      \fi
      \@tempa
    }%
  \dth@everypar={%
    \@minipagefalse \global\@newlistfalse
    \@noparitemfalse
    \if@inlabel
      \global\@inlabelfalse
      \begingroup \setbox\z@\lastbox
       \ifvoid\z@ \kern-\itemindent \fi
      \endgroup
      \unhbox\@labels
    \fi
    \if@nobreak \@nobreakfalse \clubpenalty\@M
    \else \clubpenalty\@clubpenalty \everypar{}%
    \fi
  }%
}

\title[The Discrete Logarithm Problem in Cokernels of $\calo_K$-Matrices]{The Discrete Logarithm Problem in Cokernels of $\calo_K$-Matrices}

\author{Isaac Rajagopal}
\address{Massachusetts Institute of Technology}
\email{isaacraj@mit.edu}

\date{\today}

\begin{document}

\begin{abstract}
In 2009 and 2010, Blackburn and Shokrieh independently found that the discrete logarithm can be computed efficiently on the sandpile group of a graph, meaning that sandpile groups are not secure for cryptography. We generalize this problem to cokernels of matrices with entries in the ring of integers $\mathcal{O}_K$ of a number field $K$. When $K$ has nontrivial class group, the failure of the Euclidean algorithm in $\mathcal{O}_K$ is an obstacle to generalizing previous methods. For $M$ in $\M_{n\times m}(\mathcal{O}_K)$, we overcome this obstacle to efficiently compute discrete logarithms in $\mathrm{cok}(M) = \mathcal{O}_K^n/M\mathcal{O}_K^m$. In particular, we find an algorithm with time complexity $\tilde{O}((m+n)^{\omega+1})$, where $\omega$ is an exponent of matrix multiplication, to compute discrete logarithms in $\mathrm{cok}(M)$ when $\mathrm{cok}(M)$ is viewed either as an $\mathcal{O}_K$-module or as a group. When $M$ is Hermitian with respect to a Galois involution $\sigma$ and nonsingular, we improve the time complexity to $\tilde{O}(n^\omega)$.
\end{abstract}

\maketitle

\numberwithin{theorem}{section}

\section{Introduction}

\subsection{Discrete logarithm problem}
Let $(G,+)$ be an abelian group. The discrete logarithm problem (DLP) in $G$ can be stated as follows.
\begin{problem}[Discrete Logarithm Problem (DLP)]\label{dlpgroup}
    Given $g,h$ in $G$, find some $x \in \Z$ such that $xg = h$, or determine that no such $x$ exists.
\end{problem}

\begin{remark}
    The difficulty of the DLP depends on the way that group elements are represented as data, so there are often isomorphic groups $G$ and $G'$ such that the DLP is easy in $G$ but hard in $G'$. For example, the DLP is easy to solve in $\Z/(p-1)\Z$ using the Euclidean algorithm but is harder to solve in $(\Z/p\Z)^\times$ (see \cite[Section 8.3]{KL15}).
\end{remark}

Many modern cryptographic systems rely on the existence of groups in which it is easy to compute the group operations but difficult to solve the DLP quickly. Note that the security of such groups is always conjectural since we cannot prove that solving the DLP is hard. Number theory is rife with examples of such groups, such as $(\Z/p\Z)^\times$, elliptic curve groups, class groups of quadratic number fields, and Jacobians of algebraic curves. We refer interested readers to \cite{KL15} or \cite{Buchmann00} or \cite{CFGRD06}.

\subsection{Discrete logarithm is easy for sandpile groups}
For a generic group $G$, the DLP can be solved in $O(|G|^{\frac{1}{2}})$ group operations, using algorithms such as the baby-step giant-step algorithm (see \cite[Chapter 19]{CFGRD06}). Elliptic curves are widely used in cryptography because there is no known algorithm for computing the DLP quickly on them. Any graph $\Gamma$ has a naturally associated group $S_{\Gamma}$ called the sandpile group, the Jacobian, or the Picard group of the graph. Because of analogies between the sandpile group of a graph and groups associated to algebraic curves, Biggs \cite{Biggs07} asked whether the sandpile group of a graph would be a good candidate for public-key cryptography. Shokrieh \cite{Shokrieh2010} and Blackburn \cite{Blackburn09} answered Biggs' question in the negative by finding efficient algorithms to compute the DLP on these groups. The size of $S_{\Gamma}$, which is equal to the number of spanning trees of $\Gamma$, can be exponential in the number of vertices of $\Gamma$ (see \ \cite{Biggs07}). Shokrieh and Blackburn's methods, which are polynomial in the number of vertices of $\Gamma$, are therefore much faster than the generic algorithms.

If $Q$ is a reduced Laplacian matrix of a connected graph $\Gamma$ with $n+1$ vertices, then the sandpile group can be defined by $S_{\Gamma} = \Z^{n}/Q\Z^{n}$.\footnote{Label the vertices of $\Gamma$ as $v_1,\ldots,v_{n+1}$. The $(n+1) \times (n+1)$ Laplacian matrix $L$ is formed by $L_{ij}$ being $-1$ times the number of edges connecting vertices $i$ and $j$ if $i \neq j$, and $L_{ii} = \deg v_i$. To form $Q$ from $L$, choose $1 \leq k \leq n+1$ and delete the $k^{th}$ row and column. The resulting group $S_{\Gamma}$ does not depend on the choice of $k$.} We refer the interested reader to \cite{Klivans19} or \cite{CP18} for a full overview of sandpile groups. Shokrieh's \cite[Algorithm~4.1]{Shokrieh2010} solution to the DLP on cyclic sandpile groups can be computed in $O(n^{\omega})$ integral operations, where $\omega$ is an exponent of matrix multiplication. In \cite[Remark 5.4]{Shokrieh2010}, Shokrieh used the extended Euclidean algorithm to generalize this method to noncyclic sandpile groups. The main tool used in Shokrieh's proof is the canonical perfect symmetric bilinear pairing $\langle \cdot, \cdot \rangle: S_{\Gamma} \times S_{\Gamma} \to \Q/\Z$, computable in $O(n^{\omega})$ integral operations \cite[Theorem 3.4, Proposition 3.7]{Shokrieh2010}. Independently, Blackburn \cite{Blackburn09} also found an argument to compute the DLP on $S_{\Gamma}$ in polynomial time in $n$ for specific examples of $\Gamma$, based on putting $Q$ into Smith normal form. 

In this paper, we generalize both Blackburn and Shokrieh's methods to solve the DLP for cokernels of matrices over the ring of integers $\calo_K$ of a number field $K$. The failure of the Euclidean algorithm over $K$ when $\Cl(K)$ is nontrivial suggests that Blackburn and Shokrieh's techniques may not apply in this setting. We circumvent this by using the structure of $\calo_K$ as a free $\Z$-module. 

\subsection{Cokernels of Rectangular Matrices}
For a ring\footnote{All rings are assumed to be commutative.} $R$, we can generalize the DLP to $R$-modules.

\begin{problem}[DLP on $R$-modules]\label{dlpRmod}
    Let $G$ be an $R$-module with $g,h$ in $G$. Then find some $\chi \in R$ such that $\chi g = h$, or determine that no such $\chi$ exists.
\end{problem}

Problem \ref{dlpRmod} is similar to Problem \ref{dlpgroup}, except that the scalar $\chi$ can be any element of $R$ rather than only an integer. In this article we will always take $R = \calo_K$ to be the ring of integers of a number field, but it would be interesting to study Problem \ref{dlpRmod} over other rings $R$.

\begin{remark}\label{rmkRmodules}
  We generalize the Diffie--Hellman \cite{DH76, Merkle78, ElGamal85} key exchange to work over an arbitrary ring $R$; this can be cracked if Problem \ref{dlpRmod} is solved. Let $g \in G$ be shared between Alice and Bob, and let Alice choose $\alpha$ in $R$ and Bob choose $\beta$ in $R$. Then suppose Alice shares $\alpha g$ with Bob, and Bob shares $\beta g$ with Alice, across public channels. Alice and Bob can both calculate $\alpha \beta g = \beta \alpha g$. However, someone watching the channel only knows $g$, $\alpha g$, and $\beta g$, which seems not to be enough information to calculate $\alpha$ or $\beta$ or $\alpha \beta g$ unless Problem \ref{dlpRmod} is solved.\footnote{See \cite[Section 8.3.2]{KL15} for a discussion comparing the difficulty of the DLP to the difficulty of cracking the Diffie--Hellman key exchange.}
\end{remark}

We study cokernels of matrices $M \in M_{n \times m}(\calo)$, where $\calo = \calo_K$ is the ring of integers of a fixed number field $K$ of degree $d$. Viewing $M$ as a linear map $\calo^m \to \calo^n$, we may regard $\cok(M)$ as $\calo^n/M\calo^m$, which carries the structure of both an $\calo$-module and an abelian group. Any finitely generated $\calo$-module is isomorphic to $\cok(M)$ for some such matrix $M$. Since the class group $\Cl(K)$ of $K$ can be realized as a finite cokernel, the group $\cok(M)$ is a good model for $\Cl(K)$ (see \cite{Wood15}). Using the structure of $\calo$ as a free $\Z$-module, we solve Problem \ref{dlpRmod} (and hence Problem \ref{dlpgroup}) in $\cok(M)$ in polynomial time in $m+n$. 

To state this theorem, let $\omega$ be the smallest value such that two $n \times n$ matrices can be multiplied in $O(n^\omega)$ operations; the best known bound for $\omega$ is $\omega < 2.371339$ \cite{MaMu25}. Define $f(n) \in \tilde{O}(n^\theta)$ to mean that $f(n) \in O(n^\theta\log^k(n))$ for some $k \geq 0$. Our asymptotic $\tilde{O}$ notation will hide factors depending on $d = [K:\Q]$.

\begin{theorem}\label{thmblackburn}
    Let $M \in M_{n \times m}(\calo)$, and let $g,h \in \cok(M)$. Then, in $\tilde{O}((m+n)^{\omega + 1})$ operations in $\Z$, we can explicitly describe all $\chi \in \calo$ such that $\chi g = h$.
\end{theorem}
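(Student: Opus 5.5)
We propose to reduce everything to linear algebra over $\Z$, using the fact that $\calo$ is a free $\Z$-module of rank $d$. Fix an integral basis $\omega_1,\ldots,\omega_d$ of $\calo$, which we regard as part of the fixed data of $K$ (so its cost is hidden in the $\tilde{O}$). Then $\calo^n \cong \Z^{dn}$. Multiplication by any fixed $\alpha\in\calo$ becomes a $d\times d$ integer matrix $[\alpha]$, determined by the structure constants of the basis. Replacing each entry $M_{ij}$ by the block $[M_{ij}]$ produces an integer matrix $\tilde M\in M_{dn\times dm}(\Z)$ with $\cok(M)\cong \Z^{dn}/\tilde M\Z^{dm}$ as abelian groups. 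Lift $g,h$ to vectors $\tilde g,\tilde h\in\Z^{dn}$. Write $\chi=\sum_k x_k\omega_k$ with $x=(x_1,\ldots,x_d)\in\Z^d$. The vector $\chi\tilde g$ is then $G x$, where $G\in M_{dn\times d}(\Z)$ is the integer matrix whose $k$-th column is the coordinate vector of $\omega_k\tilde g$.

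The condition $\chi g=h$ in $\cok(M)$ becomes: there exists $y\in\Z^{dm}$ with $Gx-\tilde M y=\tilde h$. Equivalently, we must solve the linear Diophantine system $A z=\tilde h$ with $A=[\,G\mid -\tilde M\,]\in M_{dn\times(d+dm)}(\Z)$ and $z=(x,y)$. The plan is to compute a Hermite or Smith normal form of $A$ with unimodular transforms, $UAV=S$. From this we read off either that no solution exists (the equation $Sz'=U\tilde h$ is inconsistent), or a particular solution $z_0$ together with a $\Z$-basis of $\ker A$. Projecting onto the first $d$ coordinates gives the complete answer:
\[
\{\chi : \chi g=h\} = \chi_0 + \Lambda,
\]
where $\chi_0$ comes from $x_0$, and $\Lambda\subseteq\calo$ is the projection of $\ker A$. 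The set $\Lambda$ is exactly the annihilator ideal $\mathrm{Ann}(g)$, and we give it by at most $d$ generators after a further Hermite normal form in $\Z^d$. This is the explicit description asserted in Theorem \ref{thmblackburn}, and it is the analogue of Blackburn's Smith-normal-form approach.

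The point of working over $\Z$ instead of $\calo$ is that it sidesteps the failure of the Euclidean algorithm when $\Cl(K)\neq 1$. No normal form over $\calo$ is ever needed, and the $\calo$-module structure is entirely encoded in the block matrices $[\alpha]$.

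The main obstacle is the complexity bound, specifically keeping the entry sizes under control. Naive Hermite or Smith normal form algorithms suffer intermediate coefficient explosion. Instead we would invoke a known fast algorithm (e.g.\ Storjohann-type Hermite/Smith normal form with transforms). Such an algorithm handles an $N\times N'$ integer matrix in $\tilde O(N^{\omega}N')$-type bit/word operations, with entries bounded polynomially in the input size. Here $N,N'=O(d(m+n))$, so the cost is $\tilde{O}((m+n)^{\omega+1})$ operations in $\Z$, with $d$-dependence absorbed. Two further costs must be checked against this bound. First, forming $\tilde M$ and $G$ costs $O(d^3(mn+n))$, which is negligible. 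Second, the particular solution and kernel basis must be recovered with bounded entry sizes. If the cited normal-form bound only covers square or full-rank input, we would first reduce $A$ to full row rank: compute its rank profile and pad to a square matrix. This costs only an extra factor absorbed in the $\tilde O$, since the dimensions remain linear in $m+n$.
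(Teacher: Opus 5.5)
Your proposal is correct and follows essentially the same route as the paper. You encode $\chi g = h$ as the integer system $[\,G \mid -\tilde M\,]z = \tilde h$ of size $dn \times (dm+d)$ and solve it with Storjohann's rank-profile and Hermite-normal-form-with-transform algorithms, reading off consistency and the solution set from the transform. Your extra observation that projecting $\ker A$ onto the $\chi$-coordinates gives exactly $\mathrm{Ann}(g)$, which you then reduce to at most $d$ generators, is a small refinement: it yields the cleaner answer $\chi_0 + \mathrm{Ann}(g)$, whereas the paper leaves the solutions parametrized as $\mathbf{x} = U\mathbf{z}$.
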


\subsection{Cokernels of Hermitian Matrices}
Because sandpile groups of connected graphs are cokernels of nonsingular integral symmetric matrices, a natural generalization to larger number fields $K$ is given by cokernels of nonsingular Hermitian matrices $M$ in $\M_{n}(\calo)$ \cite{Wood23,Lee23,Yan23,Hodges25}. To define Hermitian matrices, fix $\sigma \in \Gal(K/\Q)$ with $\sigma^2= \id$. Then, $M$ is Hermitian (with respect to $\sigma$) if $\sigma(M^t) = M$, where $M^t$ denotes the transpose of $M$. In forthcoming work, Hodges \cite{Hodges25} finds a canonical perfect Hermitian pairing on $\cok(M)$. We use this pairing to extend Shokrieh's methods to solve Problem \ref{dlpgroup} in $\cok(M)$, with fewer operations in $\Z$ than Theorem \ref{thmblackburn}.

\begin{theorem}\label{thmshokrieh}
Let $M \in \M_n(\calo)$ be a nonsingular Hermitian matrix, and let $g$ and $h$ be elements of $\cok(M)$. Then, in $\tilde O(n^{\omega})$ operations in $\Z$, we can:
\begin{enumerate}[(a)]
    \item determine whether there exists $x \in \Z$ such that $x g = h$;
    \item if such an $x$ exists, find $x_0 \in \Z$ and $\ord(g) \in \Z$ such that $x g = h$ if and only if $x \in x_0+(\ord(g))$.
\end{enumerate}
\end{theorem}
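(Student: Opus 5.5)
We have to prove Theorem \ref{thmshokrieh}. The plan is to mimic Shokrieh's method: reduce to a computation over $\Z$ via a pairing built from $M^{-1}$, instead of using any Euclidean algorithm in $\calo$.

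\textbf{Step 1: the pairing.} Fix a $\Z$-basis $\omega_1,\dots,\omega_d$ of $\calo$, so $\calo^n\cong\Z^{dn}$. Let $A\in\M_{dn}(\Z)$ be the integer matrix of multiplication by $M$. Then $\cok(M)\cong\Z^{dn}/A\Z^{dn}$ as groups, and $A$ is nonsingular because $\det A=\pm N_{K/\Q}(\det M)\neq 0$. Following Hodges, I would use the Hermitian pairing
\[\langle x,y\rangle=\sigma(x)^tM^{-1}y \in K/\calo,\]
which is well defined on $\cok(M)$ because $M$ is Hermitian, and perfect. Composing with the trace gives a perfect $\Z$-bilinear pairing
\[[x,y]=\mathrm{Tr}_{K/\Q}\langle x,y\rangle\in \Q/\Z .\]
Perfectness of the trace form uses the inverse different. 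To avoid it I would simply use the standard perfect pairing $\Z^{dn}/A\Z^{dn}\times \Z^{dn}/A^t\Z^{dn}\to\Q/\Z$, $(u,v)\mapsto v^tA^{-1}u$. The Hermitian structure is then needed only to see that $A^t$ is conjugate to $A$ by an explicit integer change of basis (the Gram matrix of the trace form composed with $\sigma$), up to the different. For the DLP, perfectness is really only needed in the following form: the map $\Z^{dn}/A\Z^{dn}\to\Hom(\cdot,\Q/\Z)$, $u\mapsto(v\mapsto v^tA^{-1}u)$, is injective. This holds for any nonsingular $A$.

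\textbf{Step 2: reduction to a cyclic/linear problem over $\Q$.} Compute $B=A^{-1}$ (rational entries, common denominator $\det A$) in $\tilde O(n^\omega)$ operations, where $d$ is fixed, using e.g.\ Storjohann-type or $p$-adic lifting methods for the bit complexity. For a lift $g\in\Z^{dn}$, the class of $g$ is zero iff $Bg\in\Z^{dn}$. Thus $xg=h$ in $\cok$ iff
\[x\,Bg-Bh\in\Z^{dn}.\]
Write $Bg=a/D$ and $Bh=b/D$ with $a,b\in\Z^{dn}$ and $D=|\det A|$. Then $\ord(g)=D/\gcd(D,a_1,\dots,a_{dn})$. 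The condition becomes the system of congruences $x a_i\equiv b_i \pmod D$ for all $i$. Each congruence $x a_i\equiv b_i$ is solvable iff $\gcd(a_i,D)\mid b_i$, and its solution set is a residue class modulo $D/\gcd(a_i,D)$. Combining these by CRT gives $x_0$ modulo $\lcm_i D/\gcd(a_i,D)=\ord(g)$, or detects inconsistency. This handles both (a) and (b), using only integer gcds and no Euclidean algorithm in $\calo$. This is where the $\Z$-structure circumvents $\Cl(K)$.

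\textbf{Main obstacle.} The difficulty is the complexity count. Combining $dn$ congruences naively is $O(n)$ gcd/CRT steps on numbers of size $\log D=\tilde O(n)$, which is fine. Computing $A^{-1}g$ and $A^{-1}h$ only requires solving two linear systems, not inverting $A$. Here the Hermitian pairing of Hodges should give the clean formula, so I would use it to express $[g,\cdot]$ via $M^{-1}g$ computed over $K$ with $\tilde O(n^\omega)$ operations. The step I expect to require care is justifying that the bit sizes, of order $\log D$, do not inflate the operation count beyond $\tilde O(n^\omega)$ when counting ``operations in $\Z$'' as in Shokrieh. First I would adopt Shokrieh's convention and cite his Proposition 3.7-style bound for the linear solves, with $d$ absorbed into the $\tilde O$.
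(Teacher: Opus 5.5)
Your Step 2 is a complete and correct proof on its own, and it is more elementary than the paper's argument.

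\textbf{The paper's route.} The paper works through Hodges' pairing. It computes $\langle g,v_i\rangle=\sigma(G^t)M^{-1}e_i$ and $\langle h,v_i\rangle$ over $K$ (Lemma~\ref{propcomputing}). It then uses perfectness (Lemma~\ref{proppairing}) and the fact that the $v_i$ generate $\cok(M)$ to get: $xg=h$ iff every $\langle h-xg,v_i\rangle$ lies in $\calo$. Expanding $x\alpha_i+b_i\psi_i=\gamma_i$ in a $\Z$-basis of $\calo$ gives $dn$ congruences modulo the $b_i$. Finally, $\ord(g)$ comes from Lemma~\ref{lemma1} as an lcm of reduced denominators, which needs a separate argument about primitive numerators.

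\textbf{Why your route is the same computation.} Since $M$ is Hermitian, $\sigma(G^t)M^{-1}=\sigma\bigl((M^{-1}G)^t\bigr)$. So the paper's pairings are just the $\sigma$-conjugates of the coordinates of $M^{-1}G$, and its criterion is exactly $M^{-1}(H-xG)\in\calo^n$. You use this criterion directly after restriction of scalars: $u\in A\Z^{dn}$ iff $A^{-1}u\in\Z^{dn}$. That immediately gives the congruences $xa_i\equiv b_i\pmod D$ and $\ord(g)=\lcm_i D/\gcd(a_i,D)=D/\gcd(D,a_1,\dots,a_{dn})$, with no further work.

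\textbf{What each approach buys.} Your argument is self-contained and does not depend on the forthcoming perfectness result. It also shows that the Hermitian hypothesis is unnecessary: only $\det M\neq 0$ is used. The paper's route connects to Shokrieh's pairing framework. Its pairing and generalized-inverse machinery is also what the author's earlier draft used for the torsion of $\cok(M)$ with $M$ possibly singular, where $M^{-1}$ does not exist.

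\textbf{Complexity.} The cost is the same under the paper's convention of counting operations in $\Z$ at unit cost. A determinant and two linear solves of size $dn$ take $O(n^\omega)$ operations, and the $dn$ gcd/CRT steps are cheaper. Your bit-size worry is one the paper does not address either.

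\textbf{Delete Step 1.} Nothing in Step 2 uses it, and it contains a false claim. Because the trace dual of $\calo$ is the inverse different rather than $\calo$, $\mathrm{Tr}_{K/\Q}\circ\langle\cdot,\cdot\rangle$ is in general not perfect. For example, take $K=\Q(i)$, $\sigma$ complex conjugation and the $1\times 1$ matrix $M=(2)$. Then $\mathrm{Tr}_{K/\Q}\bigl(\sigma(x)y/2\bigr)\in\Z$ for all $x,y\in\Z[i]$, so the traced pairing vanishes identically on $\cok(M)\cong(\Z/2\Z)^2$. Your remark that $A^t$ is conjugate to $A$ ``up to the different'' is too vague to support anything.
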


Letting $K = \Q$, $\sigma = \id$, and $M = Q$ be the reduced Laplacian of a graph, Theorem \ref{thmshokrieh} returns the sandpile case of \cite{Shokrieh2010} with a time complexity of $\tilde O(n^{\omega})$ replacing the $O(n^{\omega})$ in \cite{Shokrieh2010}.

\subsection{Methods and Outline}
In Section \ref{sectblackburn}, we prove Theorem \ref{thmblackburn} about cokernels of rectangular matrices over $\calo$. We use the structure of $\calo \simeq \Z^d$ to reduce the equation $\chi g = h$ to a system of linear equations over $\Z$. We then use results from \cite{Storjohann2000} to solve this system of linear equations by using Hermite normal form, a more efficiently computable but weaker form of Smith normal form. This can be viewed as a generalization of the methods in \cite{Blackburn09}, which involve converting $M$ (for very specific matrices $M$) to Smith normal form.

In Section \ref{sectshokrieh}, we prove Theorem \ref{thmshokrieh} about cokernels of Hermitian matrices over $\calo$ by generalizing Shokrieh's \cite{Shokrieh2010} methods over $\Z$. We first show that the perfect pairing ${\langle \cdot, \cdot \rangle:~\cok(M) \times \cok(M)~\to~K/\calo}$ found by Hodges \cite{Hodges25} is computable in $\tilde O(n^{\omega})$ operations in $\Z$, using the theory of generalized inverses of matrices. We use this to reduce solving the DLP to a system of linear equations involving the pairings $\langle g, v_i \rangle$, where $v_1,\ldots,v_n$ are images of the basis vectors of $\calo^n$ in $\cok(M)$. We then reduce these equations to a system of linear equations in $\Z$, which we can easily solve using the extended Euclidean algorithm. 

\subsection{The role of AI in this paper} In an earlier draft, we used methods similar to Section \ref{sectshokrieh} to solve Problem \ref{dlpRmod} in the torsion submodule of $\cok(M)$, where $M$ is a (possibly singular) Hermitian matrix in $M_n(\calo)$, in $\tilde{O}(n^{\omega+1})$ operations. When prompted with that earlier draft, ChatGPT 5.4 Pro generalized this result to rectangular matrices and simplified its proof, which has become Theorem \ref{thmblackburn}. So, the main proof idea in Section \ref{sectblackburn} comes from ChatGPT. We have independently verified all results in this paper. 

We briefly summarize our methods from that earlier draft here, and we are willing to share that draft upon request. Using the isomorphism $\calo \simeq \Z^d$, we converted $M$ to $M' \in \M_{dn}(\Z)$ and used Hermite normal form (with row operations) to find generators of the torsion submodule of $\cok(M')$. These allowed us to find generators of the torsion submodule of $\cok(M)$, and repeat the conventions in $(\star)$ with these generators replacing $v_1,\ldots,v_n$. We then used similar methods to those in Section \ref{sectshokrieh} to reduce the problem to the system of equations in (\ref{eqgeneral}), with $\sigma(\chi)$ replacing $x$. As these equations are more complicated to solve for $\chi \in \calo$ than $x \in \Z$, the extended Euclidean algorithm was not sufficient. So, we solved these equations in a similar manner to Section \ref{sectblackburn}, by reducing them to a system of equations over $\Z$ and then solving those using Hermite normal form.

\section{Cokernels of Rectangular Matrices}\label{sectblackburn}

We now prove Theorem \ref{thmblackburn} using a simple reduction to a linear algebra problem over $\Z$. The first paragraph of the proof was paraphrased from ChatGPT.

\begin{proof}[Proof of Theorem \ref{thmblackburn}]
    Let $M \in M_{n \times m}(\calo)$. Let $g,h \in \cok(M)$ be represented by arbitrary lifts $G$ and $H$ in $\calo^n$. Then $\chi g = h$ is equivalent to the existence of $Y \in \calo^m$ such that \begin{equation}\label{eqGPT}
        \chi G - H = MY.
    \end{equation} We now see how to reduce (\ref{eqGPT}) to a system of $dn$ linear equations in $\Z$ for $dm+d$ unknowns. 
    
    As $\calo \simeq \Z^d$ as a group, let $e_1,\ldots,e_d$ be a basis for $\calo$ as a $\Z$-module. This gives an isomorphism $\calo^n \to \Z^{dn}$, under which $G$ and $H$ become elements of $\Z^{dn}$. Using the isomorphism $\calo^m \to \Z^{dm}$ as well, $M$ becomes a linear map from $\Z^{dm} \to \Z^{dn}$, which can be represented as a matrix in $M_{dn \times dm}(\Z)$. Expanding out $\chi G$ in terms of the basis $e_1,\ldots,e_d$, (\ref{eqGPT}) becomes a system of $dn$ linear equations in $\Z$ for $dm+d$ unknowns. Here, the unknowns are $Y \in \Z^{dm}$ and $\chi \in \Z^d$. This reduces \eqref{eqGPT} to a matrix equation over $\Z$ given as $A\mathbf{x} = \mathbf{y}$, where $A$ is a $dn \times (dm+d)$ matrix of unknown rank and $\mathbf{y}$ is a vector with $dn$ entries. It is known \cite{Schrijver86,Storjohann2000} how to solve this equation for $\mathbf{x} \in \Z^{dm+d}$ in $\tilde{O}((n+m)^{\omega+1})$ operations in $\Z$. For completeness we summarize the argument here.

    First, we compute the rank $r$ of $A$ and the (row) rank profile of $A$, which is a list of $r$ rows which generate the row space of $A$. This can be done in ${\tilde O((dn+dm+d)^{\omega+1})=\tilde O((n+m)^{\omega+1 })}$ operations using the algorithm in \cite[Chapter 2]{Storjohann2000}. We reorder the rows of $A$ for notational convenience so that these are the first $r$ rows of $A$. (We have to put the rows back in their original order at the end.) We then compute the (column) Hermite normal form of $A$ in $\tilde O((dn+dm+d)^{\omega+1})=\tilde O((n+m)^{\omega+1 })$ operations using the algorithm in \cite[Chapter 6]{Storjohann2000}. This means that we can find $H$ in $\M_{dn \times (dm+d)}(\Z)$ and $U \in \text{GL}_{dm+d}(\Z)$ such that $H = AU$, and $H$ is of the following form: 
\[
H = \begin{pmatrix}
H_{11} & 0      & 0      & \cdots & 0      & 0      & \cdots & 0 \\
H_{21} & H_{22} & 0      & \cdots & 0      & 0      & \cdots & 0 \\
H_{31} & H_{32} & H_{33} & \cdots & 0      & 0      & \cdots & 0 \\
\vdots & \vdots & \vdots & \ddots & \vdots & \vdots & \ddots & \vdots \\
H_{r1} & H_{r2} & H_{r3} & \cdots & H_{rr} & 0      & \cdots & 0 \\
\vdots & \vdots & \vdots & \ddots & \vdots & \vdots & \ddots & \vdots \\
H_{(dn)1} & H_{(dn)2} & H_{(dn)3} & \cdots & H_{(dn)r} & 0 & \cdots & 0
\end{pmatrix}
\]
    with $H_{ii}\neq 0$ for $1 \leq i \leq r$.\footnote{In general, Hermite normal form only requires all columns of zeros to be at the right and the leading coefficient of each nonzero column to be below the the leading coefficient of the column to its left. However, after our reordering of the rows of $A$, the topmost $r$ rows will generate the row space, which forces the given characterization.} The matrix $U$ should be thought of as a set of column operations to perform on $A$.
    
    We now follow \cite[Corollary 5.3b]{Schrijver86} to describe how to solve $A\mathbf{x} = \mathbf{y}$ from its Hermite normal form $H$. Solving $A\mathbf{x} = \mathbf{y}$ is equivalent to solving $H\mathbf{z} = \mathbf{y}$ for $\mathbf{z} \in \Z^{dm+d}$, and then taking $\mathbf{x} = U \mathbf{z}$. As only the leftmost $r$ columns of $H$ are nonzero, we only need to consider the uppermost $r$ entries of $\mathbf{z}$, as the others can be any integers. Call this truncation $\ol{\mathbf{z}} \in \Z^r$. Let $N$ be the $r \times r$ minor of $H$ formed from the first $r$ columns and the first $r$ rows. Then, $N$ is lower triangular with nonzero diagonal entries. Let $\ol{\mathbf{y}} \in \Z^r$ be formed from the entries of $\mathbf{y}$ in the first $r$ rows. 
    
    Then, any solution to $H\mathbf{z} = \mathbf{y}$ will also yield a solution to $N \ol{\mathbf{z}} = \ol{\mathbf{y}}$. Since $N$ is lower triangular with nonzero diagonal entries, by back substitution there is a unique $\ol{\mathbf{z_0}} \in \Q^r$ such that $N \ol{\mathbf{z_0}} = \ol{\mathbf{y}}$. If $\ol{\mathbf{z_0}}$ is not in $\Z^r$, there are no solutions to $H\mathbf{z} = \mathbf{y}$ with $\mathbf{z} \in \Z^{dm+d}$.  If $\ol{\mathbf{z_0}}$ is in $\Z^r$, let $\mathbf{z_0} \in \Z^{dm+d}$ be formed by appending zeros to $\ol{\mathbf{z_0}}$. If $H\mathbf{z_0} \neq \mathbf{y}$, then there are also no solutions to to $H\mathbf{z} = \mathbf{y}$ with $\mathbf{z} \in \Z^{dm+d}$. If $\ol{\mathbf{z_0}}$ is in $\Z^r$ and $H\mathbf{z_0} = \mathbf{y}$, then the solutions to $A\mathbf{x} = \mathbf{y}$ are given by all $\mathbf{x}$ of the form $\mathbf{x}=U\mathbf{z}$, where the first $r$ coordinates of $\mathbf{z}$ are equal to $\ol{\mathbf{z_0}}$.
\end{proof}

\section{Cokernels of Hermitian Matrices}\label{sectshokrieh}

Throughout this section, let $M \in \M_n(\calo)$ be nonsingular and Hermitian with respect to $\sigma$.

\subsection{Computing the Pairings}

We now define the generalized inverse of a matrix, which will be necessary for defining the pairing on $\cok(M)$.
\begin{definition}\cite[Definition 1.1.1]{BG03}
    For an arbitrary matrix $A$ with entries in $K$, define a \emph{generalized inverse} of $A$ to be any matrix $L$ with entries in $K$ satisfying $ALA = A.$
\end{definition}
We begin by establishing the time complexity of computing a generalized inverse.
\begin{lemma}\label{propgeninverse}
    For $A \in \M_n(K)$, a generalized inverse of $A$ can be computed in $\tilde O(n^{\omega})$ operations in $\Z$.
\end{lemma}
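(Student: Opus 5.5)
We plan to reduce to computing a rank-revealing factorization of $A$ over the field $K$, and then to write down a generalized inverse from that factorization. The first step is a normalization that makes the arithmetic integral. We clear denominators by choosing a common denominator $D \in \Z_{>0}$ with $A' = DA \in \M_n(\calo)$. If $L'$ is a generalized inverse of $A'$, then $L = DL'$ satisfies
\[
ALA = D^{-1}A'\,D L'\,D^{-1}A' = D^{-1}A'L'A' = A,
\]
so it suffices to treat $A \in \M_n(\calo)$. We represent elements of $K$ in the fixed $\Q$-basis $e_1,\ldots,e_d$. A single field operation in $K$ (multiplication via the structure constants, inversion via the $d\times d$ multiplication matrix) then costs $O(1)$ operations in $\Z$ and $\Q$, where the constant depends only on $d$ and so is hidden in $\tilde O$.

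The second step is to compute an LSP/LQUP-type decomposition of $A$ over the field $K$. Concretely, we will find permutation matrices $P,Q$, the rank $r$, and invertible $r\times r$ leading block data such that
\[
PAQ=\begin{pmatrix}B & C\\ E & F\end{pmatrix},
\]
with $B\in \M_r(K)$ invertible and $r=\operatorname{rank}A$. The algorithms of Ibarra--Moran--Hui, or Storjohann's \cite[Chapter 2]{Storjohann2000} rank-profile algorithms, run over an arbitrary field in $O(n^{\omega})$ field operations; they find the rank profile and invert $B$ within that bound. Since $K$ is a field, these algorithms apply verbatim with $K$-arithmetic. Given this decomposition, we set
\[
L = Q\begin{pmatrix}B^{-1}&0\\0&0\end{pmatrix}P.
\]
Rank $r$ forces $F=EB^{-1}C$, and a direct block computation then gives $PAQ\cdot\left(\begin{smallmatrix}B^{-1}&0\\0&0\end{smallmatrix}\right)\cdot PAQ = PAQ$, hence $ALA=A$.

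The main obstacle is that the count must be in operations in $\Z$, not in $K$, and intermediate entries could blow up in bit size; the question is whether ``operations'' counts bit cost. The paper's convention, like Storjohann's, counts ring operations, so we will appeal to the field-operation count times the $O_d(1)$ overhead per $K$-operation. If bit complexity were needed, we would instead work $\Z$-linearly. The plan there is to embed $A$ into $\M_{dn}(\Z)$ via the regular representation and use Storjohann's fraction-free, $\tilde O$ bit-bounded rank-profile and inverse computations. The subtle point is that a generalized inverse over $\Q$ of the $dn\times dn$ matrix need not be $K$-linear. We would therefore prefer to keep the computation over $K$, controlling sizes by computing $B^{-1}=\operatorname{adj}(B)/\det B$ with $\det B$ computed via norms, and taking logarithmic factors into the $\tilde O$.
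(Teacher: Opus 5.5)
Your proposal is correct and is essentially the paper's argument. The paper obtains $S,T\in\GL_n(K)$ with $SAT=\left(\begin{smallmatrix} I_r&0\\0&0\end{smallmatrix}\right)$ in $\tilde O(n^\omega)$ operations in $K$ via \cite[Proposition 16.13]{BCS97}, sets $L=T\left(\begin{smallmatrix} I_r&0\\0&0\end{smallmatrix}\right)S$, and charges $O_d(1)$ operations in $\Z$ per $K$-operation; your $L=Q\left(\begin{smallmatrix} B^{-1}&0\\0&0\end{smallmatrix}\right)P$ is exactly this construction with $S=\left(\begin{smallmatrix} B^{-1}&0\\-EB^{-1}&I\end{smallmatrix}\right)P$ and $T=Q\left(\begin{smallmatrix} I&-B^{-1}C\\0&I\end{smallmatrix}\right)$, and your denominator-clearing step and bit-complexity digression are unnecessary under this operation-counting convention but harmless.
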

\begin{proof}
    Let $r$ be the rank of $A$, and let $I_r$ be an $r \times r$ identity matrix. By \cite[Proposition 16.13]{BCS97}, we can compute matrices $S,T \in \GL_n(K)$ such that \[SAT = \begin{bmatrix}
        I_r & 0 \\
        0 & 0
    \end{bmatrix},\] using $\tilde O(n^{\omega})$ operations in $K$. Then, $A = S^{-1}\begin{bmatrix}
        I_r & 0 \\
        0 & 0
    \end{bmatrix}T^{-1}$. It is easy to check that $L = T \begin{bmatrix}
        I_r & 0 \\
        0 & 0
    \end{bmatrix} S$ is a generalized inverse of $A$ (see {\ }\cite[Theorem 1.2.1]{BG03}). Since $K$ is a degree $d$ vector space over $\Q$, operations in $K$ can be computed with a constant number of operations in $\Z$.
\end{proof}

We now define Hermitian pairings and what it means for them to be perfect. 

\begin{definition}
For an $\mathcal{O}$-module $G$, a \emph{Hermitian pairing} (with respect to $\sigma$ in $\Gal(K/\Q)[2]$) is a map $\langle \cdot , \cdot \rangle :~G \times G \to K / \mathcal{O}$ satisfying the following:
\begin{enumerate}
    \item $\langle g_1 + g_2, h_1+ h_2 \rangle = \langle g_1, h_1 \rangle + \langle g_1, h_2 \rangle + \langle g_2, h_1 \rangle + \langle g_2, h_2 \rangle$ for all $g_1, g_2, h_1, h_2 \in G$;
    \item $\langle rg, sh \rangle = \sigma(r)s \langle g, h \rangle$ for all $r,s \in \mathcal{O}$ and $g,h \in G$;
    \item $\sigma(\langle h, g \rangle) = \langle g, h \rangle$ for all $g,h \in G$.
\end{enumerate}
This pairing is said to be \emph{perfect} if the induced homomorphism of abelian groups $G \to \Hom_\calo(G,K/\calo)$ defined by $g \mapsto \langle g,\cdot\rangle$ is an isomorphism.
\end{definition}

Hodges \cite{Hodges25} defines a pairing on $\cok(M)$ by generalizing the pairing in \cite[Section 1]{BL02} over $\Q$ to work over $K$. Furthermore, this pairing is perfect.

\begin{lemma}\label{proppairing}\cite{Hodges25} 
    Let $M \in \M_n(\calo)$ be nonsingular and Hermitian with respect to $\sigma$. Let $\tau$, $\tau'$ be in $\cok(M)$, and choose lifts of $\tau$ and $\tau'$ to elements $T, T'$ in $\calo^n$, respectively. Then, there exist nonzero $k,k'$ in $\calo$ and $S, S' \in \calo^n$ such that $kT = MS$ and $k'T' = MS'$. Define the pairing $\langle \cdot , \cdot \rangle:\cok(M) \times \cok(M) \to K/\calo$ by \[\langle \tau,\tau'\rangle \coloneq \frac{\sigma(S^t)MS'}{\sigma(k)k'}.\] Then, $\langle \cdot , \cdot \rangle$ is a well-defined perfect Hermitian pairing on $\cok(M)$. 
\end{lemma}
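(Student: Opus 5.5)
The plan is to check, in turn, that $k,S$ exist, that the formula is independent of all choices, that it satisfies the three Hermitian axioms, and finally that it is perfect.

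\textbf{Existence of $k,S$.} Since $M$ is nonsingular over $K$, we can write $M^{-1} = \operatorname{adj}(M)/\det(M)$. Taking $k=\det M \neq 0$ and $S=\operatorname{adj}(M)T$ gives $kT = MS$. More generally, any pair $(k,S)$ with $kT=MS$ forces $S = kM^{-1}T$. We substitute this into the formula:
\[\frac{\sigma(S^t)MS'}{\sigma(k)k'} = \sigma(T^t)\,\sigma(M^{-1})^t M M^{-1} T' = \sigma(T^t)\,\sigma(M^{-1})^t\, T'.\]
Because $M$ is Hermitian, $\sigma(M^{-1})^t = (\sigma(M^t))^{-1} = M^{-1}$. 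So the pairing equals $\sigma(T^t)M^{-1}T' \bmod \calo$, and the choice of $(k,S)$ drops out.

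\textbf{Independence of lifts.} Replacing $T'$ by $T'+MY$ changes the value by $\sigma(T^t)Y \in \calo$. Replacing $T$ by $T+MY$ changes it by $\sigma(Y^t)\sigma(M^t)M^{-1}T' = \sigma(Y^t)T' \in \calo$, again using $\sigma(M^t)=M$. So the pairing is well defined on $\cok(M)\times\cok(M)$.

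\textbf{Hermitian axioms.} Biadditivity is immediate. For $r,s \in \calo$, scaling $T$ by $r$ and $T'$ by $s$ multiplies the value by $\sigma(r)s$. For the symmetry axiom, $\sigma$ commutes with transpose, so
\[\sigma(\sigma(T'^t)M^{-1}T) = T'^t\,\sigma(M^{-1})T = \bigl(\sigma(T^t)\,\sigma(M^{-1})^t\, T'\bigr)^t = \sigma(T^t)M^{-1}T',\]
where the middle equality holds because the expression is a scalar, and the last again uses Hermitianness.

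\textbf{Perfectness, the main obstacle.} The approach is to show injectivity, then compare sizes. For injectivity, suppose $\sigma(T^t)M^{-1}T' \in \calo$ for all $T' \in \calo^n$. Then the row vector $\sigma(T^t)M^{-1}$ lies in $\calo^n$, so $\sigma(T)^t = W^t M$ for some $W \in \calo^n$. Applying $\sigma$ and transposing gives $T = \sigma(M^t)\sigma(W) = M\sigma(W)$, so $\tau = 0$.

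For the size comparison, $\cok(M)$ is a finite $\calo$-module. The map $g\mapsto\langle g,\cdot\rangle$ is additive and $\sigma$-semilinear, landing in $\Hom_\calo(\cok(M),K/\calo)$. I would then argue that $|\Hom_\calo(G,K/\calo)| = |G|$ for finite $G$. One route is to reduce to the cyclic case $G\cong\calo/I$ via the structure theory over the Dedekind domain. There $\Hom_\calo(\calo/I,K/\calo)\cong I^{-1}/\calo$, which has the same size as $\calo/I$. Alternatively, one can use the local duality of $K/\calo$ as the injective hull. An injective map between finite sets of equal size is a bijection, which gives perfectness.

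The step I expect to need the most care is this cardinality or duality fact over a general Dedekind $\calo$ when the class group is nontrivial. I would handle it by localizing at primes, where $\calo_\mathfrak{p}$ is a DVR and the claim is standard.
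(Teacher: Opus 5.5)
Your argument is correct. Note that the paper does not prove this lemma; it quotes it from Hodges' forthcoming work. The only part the paper checks is the computation \eqref{eq101}, together with the remark following the statement. That computation rewrites the defining expression as $\sigma(T^t)LT'$ for a generalized inverse $L$, which is $\sigma(T^t)M^{-1}T'$ when $M$ is nonsingular. Your first step is exactly this reduction, done via $S=kM^{-1}T$ rather than via $MLM=M$. Independence of lifts, the three Hermitian axioms, and injectivity then all follow correctly from that closed form. One slip to fix: in the symmetry chain the middle term should be $T'^t\,\sigma(M^{-1})\,\sigma(T)$, not $T'^t\,\sigma(M^{-1})\,T$. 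The outer equalities are fine.

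For surjectivity, your counting route is valid, and your class-group worry is unfounded. A finitely generated torsion module over a Dedekind domain always splits as $\bigoplus \calo/\mathfrak{p}^{e}$, and $|\mathfrak{p}^{-e}/\calo| = N(\mathfrak{p})^{e} = |\calo/\mathfrak{p}^{e}|$ regardless of $\Cl(K)$.

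The detour is unnecessary, though, because the trick you used for injectivity also gives surjectivity directly:
\begin{itemize}
\item Given $\varphi \in \Hom_\calo(\cok(M),K/\calo)$, choose $c\in K^n$ whose $i$th entry lifts $\varphi$ of the image of the $i$th standard basis vector. Then $\varphi([T']) = c^tT' \bmod \calo$ for all $T'\in\calo^n$.
\item Since $\varphi$ kills the image of each column of $M$, the row vector $c^tM$ lies in $\calo^{1\times n}$; write $c^tM = W^t$.
\item Then $T=\sigma(W)$ satisfies $\sigma(T^t)M^{-1} = W^tM^{-1} = c^t$, so $\langle [T],\cdot\rangle = \varphi$.
\end{itemize}
This keeps the whole lemma at the level of linear algebra over $K$. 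It uses only $\det M\neq 0$ and $\sigma(M^t)=M$, and not even finiteness of $\cok(M)$. Your injectivity-plus-cardinality route instead imports Dedekind structure theory. That route is standard for pairings on finite groups, and it would be the natural fallback if you only had an abstract nondegeneracy statement, but here it buys nothing extra.
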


\begin{remark}
If $M$ is invertible, then $\langle \tau,\tau'\rangle=\sigma(T^t)M^{-1}T'$. We will see this in (\ref{eq101}).
\end{remark}

We can generalize the work of \cite[Proposition 3.7]{Shokrieh2010} to efficiently compute the pairings we will need from the generalized inverse of $M$. 

\begin{lemma}\label{propcomputing}
Let  $f \in \cok(M)$, where $M \in \M_n(\calo)$ is Hermitian and nonsingular. Let $v_1,\ldots,v_n$ be elements of $\cok(M)$. Then the pairings $\langle f , v_1 \rangle,\ldots,\langle f,v_n\rangle$ as defined in Lemma~\ref{proppairing} can be computed with $\tilde O(n^{\omega})$ operations in $\Z$.
\end{lemma}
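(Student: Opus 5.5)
Since $M$ is nonsingular, I read this as $\det M\neq 0$, so $M$ is invertible over $K$. The plan is to reduce the pairing to the formula of the Remark, $\langle \tau,\tau'\rangle=\sigma(T^t)M^{-1}T'$, and then evaluate all $n$ pairings with a single matrix product.

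\textbf{Step 1: the closed formula.} Let $F$ be a lift of $f$ and $V_1,\ldots,V_n\in\calo^n$ lifts of $v_1,\ldots,v_n$. Choose the nonzero scalar $k=\det M\in\calo$. Then $S=\operatorname{adj}(M)F=kM^{-1}F\in\calo^n$ satisfies $kF=MS$, and similarly $S_i=kM^{-1}V_i$ gives $kV_i=MS_i$. By Lemma~\ref{proppairing} the pairing does not depend on these choices, so
\[
\langle f,v_i\rangle=\frac{\sigma(S^t)MS_i}{\sigma(k)k}=\frac{\sigma(k)k\,\sigma(F^t)\sigma((M^{-1})^t)MM^{-1}V_i}{\sigma(k)k}=\sigma(F^t)\,\sigma((M^{-1})^t)\,V_i .
\]
Because $M$ is Hermitian, $\sigma(M^t)=M$, hence $\sigma((M^{-1})^t)=(\sigma(M^t))^{-1}=M^{-1}$. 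This yields $\langle f,v_i\rangle=\sigma(F^t)M^{-1}V_i$ modulo $\calo$.

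\textbf{Step 2: computing $M^{-1}$.} Since $M$ is invertible, its only generalized inverse is $M^{-1}$ itself: from $MLM=M$ we get $L=M^{-1}$. So Lemma~\ref{propgeninverse} produces $M^{-1}$ in $\tilde O(n^\omega)$ operations. Alternatively, one can invoke fast matrix inversion over $K$ directly.

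\textbf{Step 3: evaluating all pairings.} Form the $n\times n$ matrix $V=[V_1\,\cdots\,V_n]$ and compute the row vector $w=\sigma(F^t)M^{-1}$. This needs $O(n)$ applications of $\sigma$ and one vector–matrix product, which costs $O(n^2)$ operations in $K$. Then compute $wV$, which costs at most $O(n^\omega)$ operations in $K$ (in fact only $O(n^2)$). Its entries are $\langle f,v_1\rangle,\ldots,\langle f,v_n\rangle$ as elements of $K$; reducing them modulo $\calo$ in a fixed integral basis is optional, since $K/\calo$ is represented by elements of $K$. Each operation in $K$, including applying $\sigma$ (a fixed $d\times d$ rational matrix on the basis), costs $O(1)$ operations in $\Z$ with the $d$-dependence hidden.

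The main obstacle I expect is bookkeeping rather than conceptual difficulty. First, one must make sure that choosing $k=\det M$ is a legitimate choice in Lemma~\ref{proppairing}; the well-definedness statement there covers this. Second, one must justify that arithmetic in $K$ counts as $\tilde O(1)$ integer operations, in the same sense as the proof of Lemma~\ref{propgeninverse}. If one instead wants a bound on bit complexity, the rational entries of $M^{-1}$ have controlled size by Cramer's rule, which keeps the $\tilde O$ bound.
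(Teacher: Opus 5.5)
Your proposal is correct and follows essentially the paper's route: use the Hermitian property to reduce $\langle f,v_i\rangle$ to $\sigma(F^t)LV_i$, obtain $L$ from Lemma~\ref{propgeninverse} in $\tilde O(n^\omega)$ operations, and then spend $O(n^2)$ on the vector--matrix products. The only cosmetic difference is in the derivation: the paper keeps $k,S$ arbitrary and uses $MLM=M$ for any generalized inverse $L$, so it never needs the well-definedness of the pairing, whereas you fix $k=\det M$, $S=\operatorname{adj}(M)F$, invoke well-definedness, and observe that $L=M^{-1}$ is forced because $M$ is invertible.
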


\begin{proof}
Choose $F$ and $V_i$ in $\calo^n$ to be lifts of $f$ and $v_i$, respectively. Let $L$ be a generalized inverse of $M$. Following the notation of Lemma~\ref{proppairing} with $\tau = f$ and $\tau' = v_i$ and $T = F$ and $T' = V_i$, 
\begin{equation}\label{eq101}
   \langle f,v_i\rangle = \frac{\sigma(S^t)MS'}{\sigma(k)k'} \\
    = \frac{\sigma(S^t)M L MS'}
    {\sigma(k)k'}\\
    = \sigma\left(\frac{(MS)^t}{k}\right) L \left(\frac{MS'}{k'} \right) \\
    = \sigma(F^t)LV_i. 
\end{equation}
By Lemma \ref{propgeninverse}, $L$ can be computed in $\tilde O(n^{\omega})$ operations in $\Z$. So the $1 \times n$ row vector $\sigma(F^t)L$ with entries in $K$ can be computed with $O(n^{2})$ further operations in $K$, and hence $O(n^2)$ operations in $\Z$. Then, for each $i \in \{1,\ldots,n\}$, computing $\langle f,v_i\rangle$ will take $O(n)$ further operations in $K$, and hence $O(n)$ operations in $\Z$. So, it takes $O(n^2)$ more operations in $\Z$ to compute $\langle f , v_1 \rangle,\ldots,\langle f,v_n\rangle$.
\end{proof}

\subsection{Solving the DLP}
We now adopt the following conventions:
\starpar{Let $M \in \M_n(\calo)$ be Hermitian with respect to $\sigma$. Let $g,h \in \cok(M)$ and let $v_1,\ldots,v_n$ be the images of the canonical basis vectors of $\calo^n$ inside $\cok(M)$. For $1 \leq i \leq n$, fix $\alpha_i,\gamma_i \in \calo$ and $b_i \in \Z$ such that $\langle g,v_i\rangle \equiv \frac{\alpha_i}{b_i} \pmod{\calo}$ and $\langle h,v_i\rangle \equiv \frac{\gamma_i}{b_i} \pmod{\calo}$.}

Using Lemma \ref{propcomputing} with both $f = g$ and $f=h$ allows us to compute $\alpha_i$ and $b_i$ and $\gamma_i$ for all $1 \leq i \leq n$ in $\tilde O(n^{\omega})$ operations in $\Z$. 

Let $\ord(g)$ be the minimum $i > 0$ such that $ig = 0$. If we can find $x_0 \in \Z$ such that $x_0 g = h$, then \[\{x \in \Z : x g = h\} = x_0 + (\ord(g)).\] We now find $\ord(g)$ in terms of $\alpha_i$ and $b_i$, in particular as the least common multiple of the denominators of the fractions $\frac{\alpha_i}{b_i}$, written in lowest terms. 

\begin{lemma}\label{lemma1}
    Assume the conventions of $(\star)$. Let $\alpha_i' \in \calo$ and $b_i' \in \Z_{>0}$ such that $\frac{\alpha_i}{b_i} =  \frac{\alpha_i'}{b_i'}$ and there is no prime $p$ such that $p \mid b_i'$ and $(\alpha_i') \subseteq (p)$. Then $\ord(g) = \lcm(b_1',\ldots,b_n').$
\end{lemma}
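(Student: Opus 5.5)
Since $v_1,\ldots,v_n$ generate $\cok(M)$ as an $\calo$-module (indeed as an abelian group together with $\calo$-scalars) and the pairing is perfect by Lemma~\ref{proppairing}, for $x\in\Z$ we have
\[
xg=0 \iff \langle xg,\cdot\rangle=0 \iff \langle xg,v_i\rangle = 0 \text{ in } K/\calo \text{ for all } i.
\]
The first equivalence is injectivity of $g\mapsto\langle g,\cdot\rangle$. For the second, note that $\langle xg,\cdot\rangle$ is $\calo$-linear in the second slot by property (2), so it vanishes on all of $\cok(M)$ iff it vanishes on the generators $v_i$. Since $x\in\Z$ is fixed by $\sigma$, property (2) gives $\langle xg,v_i\rangle = x\langle g,v_i\rangle = x\alpha_i'/b_i'$. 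Hence $\ord(g)$ is the least positive $x$ with $x\alpha_i'/b_i'\in\calo$ for every $i$.

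The main step is to show that, for $x\in\Z$, $x\alpha_i'/b_i'\in\calo$ iff $b_i'\mid x$. The reverse direction is clear. For the forward direction, suppose $b_i'\nmid x$. Then there is a prime $p$ and exponent $e\ge1$ with $p^e\,\|\,b_i'$ and $p^e\nmid x$. Write $x=p^a u$ and $b_i'=p^e w$ with $a<e$ and $p\nmid u$, $p\nmid w$.

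Suppose, for contradiction, that $x\alpha_i'=b_i'\beta$ with $\beta\in\calo$. Then $u\alpha_i' = p^{e-a}w\beta$, so $u\alpha_i'\in p\calo$. Since $\gcd(u,p)=1$, choose integers $s,t$ with $su+tp=1$. Then
\[
\alpha_i' = s(u\alpha_i') + tp\alpha_i' \in p\calo,
\]
so $(\alpha_i')\subseteq(p)$ with $p\mid b_i'$. This contradicts the hypothesis on $\alpha_i',b_i'$.

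This coprimality step is the only place the hypothesis is used, and it is where care is needed. In particular, no unique factorization in $\calo$ is needed: the argument works because $p\calo\cap\Z=p\Z$ and $u$ is invertible modulo $p$.

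It follows that $x\langle g,v_i\rangle=0$ for all $i$ iff $b_i'\mid x$ for all $i$, iff $\lcm(b_1',\ldots,b_n')\mid x$. Therefore $\ord(g)=\lcm(b_1',\ldots,b_n')$. This holds because $\cok(M)$ is finite, as $M$ is nonsingular, so $\ord(g)$ exists, and the set of annihilating integers is exactly the ideal generated by this lcm.
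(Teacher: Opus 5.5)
Your proof is correct and follows the paper's proof. You use perfectness and the generators $v_i$ to reduce $xg=0$ to $x\alpha_i'/b_i'\in\calo$ for all $i$, show this holds iff $b_i'\mid x$, and intersect to get the lcm. The only difference is cosmetic: you prove the key divisibility step basis-free via B\'ezout ($u$ invertible mod $p$), whereas the paper writes $\alpha_i'$ in a $\Z$-basis of $\calo$ and uses a coordinate $a_j$ not divisible by $p$.
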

\begin{proof}
    Let $e \in \cok(M)$. By Lemma \ref{proppairing}, the pairing $\langle \cdot,\cdot\rangle$ is perfect, so $e = 0$ if and only if $\langle e,f\rangle = 0$ for all $f \in \cok(M)$. As $v_1,\ldots,v_n$ generate $\cok(M)$, then $e=0$ if and only if $\langle e, v_i \rangle \equiv 0 \pmod{\calo}$ for all $1 \leq i \leq n$. 

    Using that $\langle \cdot,\cdot\rangle$ is Hermitian and $\sigma|_{\Z} = \id|_{\Z}$, \begin{align*}(\ord(g)) &\coloneq \{x \in \Z: xg = 0\} \\ 
    &= \{x \in \Z:\langle x g,v_i\rangle \equiv 0\text{ mod}(\calo) \text{ for all } 1\leq i \leq n\} \\
    &= \{x \in \Z:\frac{\sigma(x) \alpha_i'}{b_i'} \in \calo \text{ for all } 1\leq i \leq n\} \\
    &= \{x \in \Z: x\alpha_i' \in (b_i') \text{ for all } 1\leq i \leq n\}.
    \end{align*}

    We now show that, for $x \in \Z$, $x \alpha_i' \in (b_i')$ if and only if $b_i' \mid x$.

     Let $e_1,\ldots,e_d$ be our basis for $\calo$ as $\Z$-module. Then, let $\alpha_i' = a_1e_1+\cdots +a_de_d$ for $a_j \in \Z$. Therefore, \[x \alpha_i'= x a_1e_1+\cdots +x a_de_d.\] Notice that $(b_i')$ is the set of all elements of the form $r_1e_1+\cdots+r_de_d$, where $r_j \in \Z$ with $b_i' \mid r_j$ for all $j$. Because $x \alpha_i' \in (b_i')$, then $b_i' \mid xa_j$ for all $1\leq j\leq d$. By assumption, there is no prime $p$ which divides $b_i'$ and $a_j$ for all $j$. So, for all prime powers $p^k$ dividing $b_i'$, we can find some $j$ such that $p \nmid a_j$ and $p^k \mid x a_j$. Hence, $p^k \mid x$. So, $b_i' \mid x$.

    Using the work in the last paragraph, we have
    \[ (\ord(g)) = \bigcap_{i=1}^n \{x \in \Z:x \alpha_i' \in (b_i')\}= \bigcap_{i=1}^s (b_i')= (\lcm(b_1',\ldots,b_n')),\] so we are done.
\end{proof}

We are now ready to prove Theorem \ref{thmshokrieh}, using the definitions in $(\star)$ to reduce the problem to a simple system of equations which can be solved using the extended Euclidean algorithm.

\begin{proof}[Proof of Theorem \ref{thmshokrieh}]
    Clearly, $x g = h$ if and only if $h - x g =0$. By the same logic in Lemma \ref{lemma1}, as $\langle \cdot,\cdot\rangle$ is perfect, then $h - x g =0$ if and only if $\langle h -x g,v_i \rangle \equiv 0 \pmod{\calo}$ for all $1 \leq i \leq n$. Because $\langle \cdot,\cdot\rangle$ is Hermitian and $\gamma_i/b_i$ is an arbitrary lift of $\langle h,v_i\rangle$ to $K$, 
    \[\langle h -x g,v_i \rangle \equiv \langle h,v_i\rangle-\frac{\sigma(x)\alpha_i}{b_i} \equiv \frac{\gamma_i}{b_i}-\frac{x\alpha_i}{b_i} \pmod{\calo}.\]
    So, $x g = h$ if and only if $\frac{\gamma_i}{b_i}-\frac{x\alpha_i}{b_i} \in \calo$ for all $1 \leq i \leq n$. This is equivalent to there existing $\psi_1,\ldots,\psi_n$ in $\calo$ that solve the system of equations \begin{equation}\label{eqgeneral}
        x \alpha_i + b_i\psi_i = \gamma_i \text{ for }1\leq i\leq n.
    \end{equation}
    Letting $e_1,\ldots,e_d$ be a basis for $\calo$ as a $\Z$-module, let
    \begin{equation*}
      \begin{aligned}
        \alpha_i &= a_i^{(1)}e_1 + \cdots + a_i^{(d)}e_d  \\
        \psi_i &= y_i^{(1)}e_1 + \cdots + y_i^{(d)}e_d \\
        \gamma_i &= c_i^{(1)}e_1 + \cdots + c_i^{(d)}e_d.
    \end{aligned}
    \label{eq:defs}
  \end{equation*}
  Then (\ref{eqgeneral}) is equivalent to the system of $dn$ equations over $\Z$ given by 
  \begin{equation}\label{eqbig2}
xa_i^{(k)}+b_iy_i^{(k)}=c_i^{(k)} \text{ for } 1\leq i \leq n,1\leq k \leq d\;.
    \end{equation}
The equations in \eqref{eqbig2} can be viewed as $dn$ congruences of the form $x a_i^{(k)} \equiv c_i^{(k)}\pmod{b_i}$. So, we can determine whether there exists $x \in \Z$ satisfying (\ref{eqbig2}) and find such an $x$ if it exists using the extended Euclidean algorithm and the Chinese Remainder Theorem, which can be run in $O(dn) = O(n)$ operations in $\Z$. Combining this with Lemma \ref{lemma1} yields the theorem.
\end{proof}

\section*{Acknowledgements}

This research was conducted at the 2025 University of Minnesota Duluth REU with support from Jane Street Capital, NSF Grant 2409861, and donations from Ray Sidney and Eric Wepsic. I thank Joe Gallian and Colin Defant for providing this wonderful opportunity. I thank Eliot Hodges for suggesting this project and advising my whole research process, during which he gave detailed feedback and suggestions. I thank Tommy Hofmann, Mitchell Lee, Nathan Sheffield, and Arne Storjohann for many helpful suggestions. Finally, I thank Eliot Hodges, Noah Kravitz, Mitchell Lee, Rupert Li, and Maya Sankar for advising the whole Duluth REU.

%%%%%%%%%%%%%%%%%%%%%%%%%%%% References %%%%%%%%%%%%%%%%%%%%%%%%%%%%

\bibliography{bibliography}{}

@article {Shokrieh2010,
    AUTHOR = {Shokrieh, Farbod},
     TITLE = {The monodromy pairing and discrete logarithm on the {J}acobian
              of finite graphs},
   JOURNAL = {J. Math. Cryptol.},
  FJOURNAL = {Journal of Mathematical Cryptology},
    VOLUME = {4},
      YEAR = {2010},
    NUMBER = {1},
     PAGES = {43--56},
      ISSN = {1862-2976,1862-2984},
   MRCLASS = {05C50 (14H40 94A60)},
  MRNUMBER = {2660333},
       DOI = {10.1515/JMC.2010.002},
       URL = {https://doi.org/10.1515/JMC.2010.002},
}

@article{Merkle78,
author = {Merkle, Ralph C.},
title = {Secure communications over insecure channels},
year = {1978},
issue_date = {April 1978},
publisher = {Association for Computing Machinery},
address = {New York, NY, USA},
volume = {21},
number = {4},
issn = {0001-0782},
url = {https://doi.org/10.1145/359460.359473},
doi = {10.1145/359460.359473},
journal = {Commun. ACM},
month = apr,
pages = {294–299},
numpages = {6}
}

@ARTICLE{DH76,
  author={Diffie, Whitfield and Hellman, Martin},
  journal={IEEE Trans. Inform. Theory}, 
  title={New directions in cryptography}, 
  year={1976},
  volume={22},
  number={6},
  pages={644-654},
  doi={10.1109/TIT.1976.1055638}}

@article {Biggs07,
    AUTHOR = {Biggs, Norman},
     TITLE = {The critical group from a cryptographic perspective},
   JOURNAL = {Bull. Lond. Math. Soc.},
  FJOURNAL = {Bulletin of the London Mathematical Society},
    VOLUME = {39},
      YEAR = {2007},
    NUMBER = {5},
     PAGES = {829--836},
      ISSN = {0024-6093,1469-2120},
   MRCLASS = {94A60 (05C25 20K01)},
  MRNUMBER = {2365232},
MRREVIEWER = {Marc\ M.\ Gysin},
       DOI = {10.1112/blms/bdm070},
       URL = {https://doi.org/10.1112/blms/bdm070},
}

@article {ElGamal85,
    AUTHOR = {El{G}amal, Taher},
     TITLE = {A public key cryptosystem and a signature scheme based on
              discrete logarithms},
   JOURNAL = {IEEE Trans. Inform. Theory},
  FJOURNAL = {Institute of Electrical and Electronics Engineers.
              Transactions on Information Theory},
    VOLUME = {31},
      YEAR = {1985},
    NUMBER = {4},
     PAGES = {469--472},
      ISSN = {0018-9448,1557-9654},
   MRCLASS = {94A60 (11T71)},
  MRNUMBER = {798552},
       DOI = {10.1109/TIT.1985.1057074},
       URL = {https://doi.org/10.1109/TIT.1985.1057074},
}

@article {Blackburn09,
    AUTHOR = {Blackburn, Simon R.},
     TITLE = {Cryptanalysing the critical group: efficiently solving
              {B}iggs's discrete logarithm problem},
   JOURNAL = {J. Math. Cryptol.},
  FJOURNAL = {Journal of Mathematical Cryptology},
    VOLUME = {3},
      YEAR = {2009},
    NUMBER = {3},
     PAGES = {199--203},
      ISSN = {1862-2976,1862-2984},
   MRCLASS = {94A60 (05C50)},
  MRNUMBER = {2604686},
       DOI = {10.1515/JMC.2009.010},
       URL = {https://doi.org/10.1515/JMC.2009.010},
}

@book {KL15,
    AUTHOR = {Katz, Jonathan and Lindell, Yehuda},
     TITLE = {Introduction to modern cryptography},
    SERIES = {Chapman \& Hall/CRC Cryptography and Network Security},
   EDITION = {Second},
 PUBLISHER = {CRC Press, Boca Raton, FL},
      YEAR = {2015},
     PAGES = {xx+583},
      ISBN = {978-1-4665-7026-9},
   MRCLASS = {94-02 (11T71 94A60 94A62)},
  MRNUMBER = {3287369},
MRREVIEWER = {Maura\ Beth\ Paterson},
}

@book {Schrijver86,
    AUTHOR = {Schrijver, Alexander},
     TITLE = {Theory of linear and integer programming},
    SERIES = {Wiley-Interscience Series in Discrete Mathematics},
      NOTE = {A Wiley-Interscience Publication},
 PUBLISHER = {John Wiley \& Sons, Ltd., Chichester},
      YEAR = {1986},
     PAGES = {xii+471},
      ISBN = {0-471-90854-1},
   MRCLASS = {90C05 (90C10)},
  MRNUMBER = {874114},
MRREVIEWER = {J\"urgen\ K\"ohler},
}

@incollection {Buchmann00,
    AUTHOR = {Buchmann, Johannes and Hamdy, Safuat},
     TITLE = {A survey on {IQ} cryptography},
 BOOKTITLE = {Public-key cryptography and computational number theory
              ({W}arsaw, 2000)},
     PAGES = {1--15},
 PUBLISHER = {de Gruyter, Berlin},
      YEAR = {2001},
      ISBN = {3-11-017046-9},
   MRCLASS = {94A60 (94-02)},
  MRNUMBER = {1881623},
}

@book {BG03,
    AUTHOR = {Ben-Israel, Adi and Greville, Thomas N. E.},
     TITLE = {Generalized inverses},
    SERIES = {CMS Books in Mathematics/Ouvrages de Math\'ematiques de la
              SMC},
    VOLUME = {15},
   EDITION = {Second},
      NOTE = {Theory and applications},
 PUBLISHER = {Springer-Verlag, New York},
      YEAR = {2003},
     PAGES = {xvi+420},
      ISBN = {0-387-00293-6},
   MRCLASS = {15A09 (47A05 47A50)},
  MRNUMBER = {1987382},
}

@inproceedings {MaMu25,
    AUTHOR = {Alman, Josh and Duan, Ran and Vassilevska Williams, Virginia
              and Xu, Yinzhan and Xu, Zixuan and Zhou, Renfei},
     TITLE = {More asymmetry yields faster matrix multiplication},
 BOOKTITLE = {Proceedings of the 2025 {A}nnual {ACM}-{SIAM} {S}ymposium on
              {D}iscrete {A}lgorithms ({SODA})},
     PAGES = {2005--2039},
 PUBLISHER = {SIAM, Philadelphia, PA},
      YEAR = {2025},
      ISBN = {978-1-61197-832-2},
   MRCLASS = {68W30},
  MRNUMBER = {4863478},
       DOI = {10.1137/1.9781611978322.63},
       URL = {https://doi-org.libproxy.mit.edu/10.1137/1.9781611978322.63},
}

@book {BCS97,
    AUTHOR = {B\"urgisser, Peter and Clausen, Michael and Shokrollahi,
              Amin},
     TITLE = {Algebraic complexity theory},
    SERIES = {Grundlehren der mathematischen Wissenschaften [Fundamental
              Principles of Mathematical Sciences]},
    VOLUME = {315},
      NOTE = {With the collaboration of Thomas Lickteig},
 PUBLISHER = {Springer-Verlag, Berlin},
      YEAR = {1997},
     PAGES = {xxiv+618},
      ISBN = {3-540-60582-7},
   MRCLASS = {68-02 (12Y05 65Y20 68Q05 68Q15 68Q25 68Q40)},
  MRNUMBER = {1440179},
MRREVIEWER = {Alexander\ I.\ Barvinok},
       DOI = {10.1007/978-3-662-03338-8},
       URL = {https://doi.org/10.1007/978-3-662-03338-8},
}

@article {BL02,
    AUTHOR = {Bosch, Siegfried and Lorenzini, Dino},
     TITLE = {Grothendieck's pairing on component groups of {J}acobians},
   JOURNAL = {Invent. Math.},
  FJOURNAL = {Inventiones Mathematicae},
    VOLUME = {148},
      YEAR = {2002},
    NUMBER = {2},
     PAGES = {353--396},
      ISSN = {0020-9910,1432-1297},
   MRCLASS = {14K15 (14H25 14H40)},
  MRNUMBER = {1906153},
MRREVIEWER = {Alessandra\ Bertapelle},
       DOI = {10.1007/s002220100195},
       URL = {https://doi.org/10.1007/s002220100195},
}

@phdthesis{Storjohann2000,
  author       = {Arne Storjohann},
  title        = {Algorithms for Matrix Canonical Forms},
  school       = {ETH Zurich},
  year         = {2000},
  url          = {https://cs.uwaterloo.ca/~astorjoh/diss2up.pdf}
}

@book {CP18,
    AUTHOR = {Corry, Scott and Perkinson, David},
     TITLE = {Divisors and sandpiles: An introduction to chip-firing},
      NOTE = {},
 PUBLISHER = {American Mathematical Society, Providence, RI},
      YEAR = {2018},
     PAGES = {xiv+325},
      ISBN = {978-1-4704-4218-7},
   MRCLASS = {05-01 (05C25 05C50)},
  MRNUMBER = {3793659},
MRREVIEWER = {Carlos\ Alejandro\ Alfaro},
       DOI = {10.1090/mbk/114},
       URL = {https://doi.org/10.1090/mbk/114},
}

@book {Klivans19,
    AUTHOR = {Klivans, Caroline J.},
     TITLE = {The mathematics of chip-firing},
    SERIES = {Discrete Mathematics and its Applications (Boca Raton)},
 PUBLISHER = {CRC Press, Boca Raton, FL},
      YEAR = {2019},
     PAGES = {xii+295},
      ISBN = {978-1-138-63409-1},
   MRCLASS = {05-01 (05C90 37E25)},
  MRNUMBER = {3889995},
MRREVIEWER = {Paul\ Andrew\ Dreyer, Jr.},
}

@book {CFGRD06,
     TITLE = {Handbook of elliptic and hyperelliptic curve cryptography},
    SERIES = {Discrete Mathematics and its Applications (Boca Raton)},
    EDITOR = {Cohen, Henri and Frey, Gerhard and Avanzi, Roberto and Doche,
              Christophe and Lange, Tanja and Nguyen, Kim and Vercauteren,
              Frederik},
 PUBLISHER = {Chapman \& Hall/CRC, Boca Raton, FL},
      YEAR = {2006},
     PAGES = {xxxiv+808},
      ISBN = {978-1-58488-518-4; 1-58488-518-1},
   MRCLASS = {14G50 (11G05 11G07 11T71 94-00 94A60)},
  MRNUMBER = {2162716},
MRREVIEWER = {Steven\ D.\ Galbraith},
}

@article {Hodges25,
    AUTHOR = {Hodges, Eliot},
     TITLE = {Cokernels of Random {H}ermitian Matrices with Quadratic Integer Entries},
   JOURNAL = {forthcoming}
}

@article {Lee23,
    AUTHOR = {Lee, Jungin},
     TITLE = {Universality of the cokernels of random {$p$}-adic {H}ermitian
              matrices},
   JOURNAL = {Trans. Amer. Math. Soc.},
  FJOURNAL = {Transactions of the American Mathematical Society},
    VOLUME = {376},
      YEAR = {2023},
    NUMBER = {12},
     PAGES = {8699--8732},
      ISSN = {0002-9947,1088-6850},
   MRCLASS = {15B52 (11E39 60B20)},
  MRNUMBER = {4669308},
MRREVIEWER = {Yifeng\ Huang},
       DOI = {10.1090/tran/9031},
       URL = {https://doi.org/10.1090/tran/9031},
}

@incollection {Wood23,
    AUTHOR = {Wood, Melanie Matchett},
     TITLE = {Probability theory for random groups arising in number theory},
 BOOKTITLE = {I{CM}---{I}nternational {C}ongress of {M}athematicians. {V}ol.
              6. {S}ections 12--14},
     PAGES = {4476--4508},
 PUBLISHER = {EMS Press, Berlin},
      YEAR = {[2023] \copyright 2023},
      ISBN = {978-3-98547-064-8; 978-3-98547-564-3; 978-3-98547-058-7},
   MRCLASS = {60E05 (11N45 11R29 11R45 15B52 60B99 60F05)},
  MRNUMBER = {4680411},
MRREVIEWER = {Michael\ Voit},
}

@article {Yan23,
    AUTHOR = {Yan, Eric},
     TITLE = {Universality for {C}okernels of {D}edekind {D}omain {V}alued
              {R}andom {M}atrices},
   JOURNAL = {Michigan Math. J.},
  FJOURNAL = {Michigan Mathematical Journal},
    VOLUME = {75},
      YEAR = {2025},
    NUMBER = {5},
    PAGES = {1071–-1084},
      ISSN = {0026-2285,1945-2365},
   MRCLASS = {11R45 (11K99 13F05 60B20)},
  MRNUMBER = {4979926},
       DOI = {10.1307/mmj/20236348},
       URL = {https://doi-org.libproxy.mit.edu/10.1307/mmj/20236348},
}

@article {Wood15,
    AUTHOR = {Wood, Melanie Matchett},
     TITLE = {Random integral matrices and the {C}ohen-{L}enstra heuristics},
   JOURNAL = {Amer. J. Math.},
  FJOURNAL = {American Journal of Mathematics},
    VOLUME = {141},
      YEAR = {2019},
    NUMBER = {2},
     PAGES = {383--398},
      ISSN = {0002-9327,1080-6377},
   MRCLASS = {11M50 (60B20)},
  MRNUMBER = {3928040},
MRREVIEWER = {Adam\ J.\ Harper},
       DOI = {10.1353/ajm.2019.0008},
       URL = {https://doi-org.libproxy.mit.edu/10.1353/ajm.2019.0008},
}
\bibliographystyle{alpha}

\end{document}